\newtheorem{theorem}{{\sc Theorem}}[section]
\newtheorem{corollary}[theorem]{Corollary}
\def\XXint#1#2#3{{\setbox0=\hbox{$#1{#2#3}{\int}$ }
\vcenter{\hbox{$#2#3$ }}\kern-.6\wd0}}
\bmdefine\BGa{\alpha}
\bmdefine\BGb{\beta}
\bmdefine\BGd{\delta}
\bmdefine\BGe{\epsilon}
\bmdefine\BGve{\varepsilon}
\bmdefine\BGf{\phi}
\bmdefine\BGvf{\varphi}
\bmdefine\BGg{\gamma}
\bmdefine\BGc{\chi}
\bmdefine\BGi{\iota}
\bmdefine\BGk{\kappa}
\bmdefine\BGl{\lambda}
\bmdefine\BGn{\eta}
\bmdefine\BGm{\mu}
\bmdefine\BGv{\nu}
\bmdefine\BGp{\pi}
\bmdefine\BGth{\theta}
\bmdefine\BGvth{\vartheta}
\bmdefine\BGr{\rho}
\bmdefine\BGvr{\varrho}
\bmdefine\BGs{\sigma}
\bmdefine\BGvs{\varsigma}
\bmdefine\BGt{\tau}
\bmdefine\BGj{\tau}
\bmdefine\BGu{\upsilon}
\bmdefine\BGo{\omega}
\bmdefine\BGx{\xi}
\bmdefine\BGy{\psi}
\bmdefine\BGz{\zeta}
\bmdefine\BGD{\Delta}
\bmdefine\BGF{\Phi}
\bmdefine\BGG{\Gamma}
\bmdefine\BGL{\Lambda}
\bmdefine\BGP{\Pi}
\bmdefine\BGT{\Theta}
\bmdefine\BGS{\Sigma}
\bmdefine\BGU{\Upsilon}
\bmdefine\BGO{\Omega}
\bmdefine\BGX{\Xi}
\bmdefine\BGY{\Psi}
\bmdefine\BCA{{\mathcal A}}
\bmdefine\BCB{{\mathcal B}}
\bmdefine\BCC{{\mathcal C}}
\bmdefine\BCD{{\mathcal D}}
\bmdefine\BCE{{\mathcal E}}
\bmdefine\BCF{{\mathcal F}}
\bmdefine\BCG{{\mathcal G}}
\bmdefine\BCH{{\mathcal H}}
\bmdefine\BCI{{\mathcal I}}
\bmdefine\BCJ{{\mathcal J}}
\bmdefine\BCK{{\mathcal K}}
\bmdefine\BCL{{\mathcal L}}
\bmdefine\BCM{{\mathcal M}}
\bmdefine\BCN{{\mathcal N}}
\bmdefine\BCO{{\mathcal O}}
\bmdefine\BCP{{\mathcal P}}
\bmdefine\BCQ{{\mathcal Q}}
\bmdefine\BCR{{\mathcal R}}
\bmdefine\BCS{{\mathcal S}}
\bmdefine\BCT{{\mathcal T}}
\bmdefine\BCU{{\mathcal U}}
\bmdefine\BCV{{\mathcal V}}
\bmdefine\BCW{{\mathcal W}}
\bmdefine\BCX{{\mathcal X}}
\bmdefine\BCY{{\mathcal Y}}
\bmdefine\BCZ{{\mathcal Z}}
\bmdefine\Bzr{ 0}
\bmdefine\Ba{ a}
\bmdefine\Bb{ b}
\bmdefine\Bc{ c}
\bmdefine\Bd{ d}
\bmdefine\Be{ e}
\bmdefine\Bf{ f}
\bmdefine\Bg{ g}
\bmdefine\Bh{ h}
\bmdefine\Bi{ i}
\bmdefine\Bj{ j}
\bmdefine\Bk{ k}
\bmdefine\Bl{ l}
\bmdefine\Bm{ m}
\bmdefine\Bn{ n}
\bmdefine\Bo{ o}
\bmdefine\Bp{ p}
\bmdefine\Bq{ q}
\bmdefine\Br{ r}
\bmdefine\Bs{ s}
\bmdefine\Bt{ t}
\bmdefine\Bu{ u}
\bmdefine\Bv{ v}
\bmdefine\Bw{ w}
\bmdefine\Bx{ x}
\bmdefine\By{ y}
\bmdefine\Bz{ z}
\bmdefine\BA{ A}
\bmdefine\BB{ B}
\bmdefine\BC{ C}
\bmdefine\BD{ D}
\bmdefine\BE{ E}
\bmdefine\BF{ F}
\bmdefine\BG{ G}
\bmdefine\BH{ H}
\bmdefine\BI{ I}
\bmdefine\BJ{ J}
\bmdefine\BK{ K}
\bmdefine\BL{ L}
\bmdefine\BM{ M}
\bmdefine\BN{ N}
\bmdefine\BO{ O}
\bmdefine\BP{ P}
\bmdefine\BQ{ Q}
\bmdefine\BR{ R}
\bmdefine\BS{ S}
\bmdefine\BT{ T}
\bmdefine\BU{ U}
\bmdefine\BV{ V}
\bmdefine\BW{ W}
\bmdefine\BX{ X}
\bmdefine\BY{ Y}
\bmdefine\BZ{ Z}
\begin{document}
\title{When the Cauchy inequality becomes a formula}
\author{ Davit Harutyunyan\footnote{\'Ecole Polytechnique F\'ed\'eral de Lausanne, davit.harutyunyan@epfl.ch}}
\maketitle
\begin{abstract}
  In this note we revisit the classical geometric-arithmetic mean inequality and find a formula for the difference
 of the arithmetic and the geometric means of given $n\in\mathbb N$ nonnegative numbers $x_1,x_2,\dots,x_n$. The formula yields new stronger versions of the geometric-arithmetic mean inequality. We also find a second version of a strong geometric-arithmetic mean inequality and
 show that all inequalities are optimal in some sense. Anther striking novelty is, that the equality in all new inequalities holds not only in the case when all $n$ numbers are equal, but also in other cases.
\end{abstract}

\section{The Cauchy Inequality}

As the topic is most classical and probably the most known one among mathematicians (and could be even school students) we do not
devote a special introduction section to it, but just recall the inequality, that is due to Augustine-Louis Cauchy [\ref{bib:Cauchy}], see also
the books [\ref{bib:Arn.Arn.1},\ref{bib:Steele}] and the references therein for detailed review of the inequality.
\begin{theorem}(Cauchy or geometric-arithmetic mean inequality)
\label{th:1.1}
Assume $n\in\mathbb N$ is a natural number. Then for any nonnegative numbers $x_1,x_2,\dots,x_n$ the inequality holds
\begin{equation}
\label{1.1}
\frac{x_1+x_2+\dots +x_n}{n}\geq (x_1x_2\dots x_n)^{1/n}.
\end{equation}
\end{theorem}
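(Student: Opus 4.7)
The plan is to follow Cauchy's own forward--backward induction argument on $n$. First I would verify the base case $n=2$: the inequality $(x_1+x_2)/2\geq\sqrt{x_1x_2}$ rearranges to $(\sqrt{x_1}-\sqrt{x_2})^2\geq 0$, which is immediate. This also identifies the equality case: $x_1=x_2$.

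Next I would push the inequality forward along the powers of two. Assuming \eqref{1.1} has been established for some $n$, I would prove it for $2n$ by splitting the $2n$ numbers into two blocks of size $n$, applying the inductive hypothesis inside each block to bound its geometric mean by its arithmetic mean, and then invoking the $n=2$ case on the two resulting block-means. Induction on $k$ then delivers \eqref{1.1} for every $n$ of the form $2^k$, giving arbitrarily large indices for free.

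The backward step fills in the remaining values. Supposing \eqref{1.1} holds for $n+1$ numbers, let $x_1,\dots,x_n$ be given and set $A=(x_1+\dots+x_n)/n$. Applying the $(n+1)$-variable inequality to $x_1,\dots,x_n,A$, whose arithmetic mean is still $A$, produces
\[
A\;\geq\;\bigl(x_1\,x_2\cdots x_n\cdot A\bigr)^{1/(n+1)},
\]
which rearranges to $A^n\geq x_1x_2\cdots x_n$, i.e.\ \eqref{1.1} for $n$ numbers. Since from any index $n$ one can first jump up to some $2^k\geq n$ via the forward step and then descend back down to $n$ via iterated backward steps, this yields the inequality for every $n\in\mathbb N$.

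The main ``obstacle'' here is purely conceptual rather than computational: the standard forward induction $n\to n+1$ does not seem to close directly, and the remedy is precisely to pair a sparse forward induction along $\{2^k\}$ with a backward step that drops from $n+1$ down to $n$. Beyond this, the only technicality is the degenerate case $A=0$: if any $x_i=0$ then \eqref{1.1} is trivial, so one may assume all $x_i>0$ and hence $A>0$, which makes the rearrangement in the backward step legal.
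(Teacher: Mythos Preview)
Your forward--backward induction is correct and is indeed Cauchy's original argument. The paper, however, does not give a separate proof of Theorem~1.1; it is stated as classical, and the inequality is instead recovered from Theorem~2.1, which expresses $A-G$ as an explicit infinite sum of squares,
\[
A-G=\sum_{k=1}^\infty 2^{k-1}G^{1-1/2^{k-1}}\cdot\frac{1}{n}\sum_{i=1}^n\bigl(x_i^{1/2^k}-G^{1/2^k}\bigr)^2,
\]
obtained by iterating the identity $A-G-\tfrac{1}{n}\sum_i(\sqrt{x_i}-\sqrt G)^2=2\sqrt G\bigl(\tfrac{1}{n}\sum_i\sqrt{x_i}-\sqrt G\bigr)$ and showing the remainder tends to zero. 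That formula automatically yields $A\ge G$ since every summand is nonnegative, and it delivers more besides: the quantitative lower bound of Corollary~2.2 and the optimality discussion of Section~4. Your approach is purely combinatorial---no limits, no analysis of remainders---and is arguably the cleanest route to the bare inequality, but it produces no remainder term and hence none of the stability information the rest of the paper is built around.
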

Denote for convenience the arithmetic and geometric means by (by skipping the $x_i$ variable dependence)
\begin{equation}
\label{1.2}
A=\frac{x_1+x_2+\dots +x_n}{n},\qquad G=(x_1x_2\dots x_n)^{1/n}.
\end{equation}
Then the Cauchy inequality reads as
$$A-G\geq0.$$
We aim to find a formula for the difference $A-G$ that is a sum of squares, i.e., it is obviously always nonnegative. The formula is provided in the next section which is the main result of the paper.

\section{The new formula}
\setcounter{equation}{0}

In this section we prove a formula for the difference $A-G$ by iterating an inequality on $A-G.$ We prove the following theorem:
\begin{theorem}(The new formula)
\label{th:2.1}
Assume $n\in\mathbb N$ is a natural number. Then for any nonnegative numbers $x_1,x_2,\dots,x_n$ the equality holds
\begin{equation}
\label{2.1}
A-G=\sum_{k=1}^\infty2^{k-1}G^{1-\frac{1}{2^{k-1}}}\frac{1}{n}\sum_{i=1}^n\left(x_i^{\frac{1}{2^k}}-G^{\frac{1}{2^k}}\right)^2.
\end{equation}
\end{theorem}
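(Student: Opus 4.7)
The plan is to derive the formula by iterating an elementary one-step decomposition and then passing to the limit. The driving observation is the trivial algebraic identity $a-b = (\sqrt a - \sqrt b)^2 + 2\sqrt b(\sqrt a - \sqrt b)$ for $a,b \ge 0$. Applying it to each pair $(x_i, G)$ and averaging gives the one-step identity
$$A - G = \frac{1}{n}\sum_{i=1}^n \bigl(x_i^{1/2}-G^{1/2}\bigr)^2 + 2 G^{1/2}\bigl(A^{(1)}-G^{1/2}\bigr),\qquad A^{(1)}:=\frac{1}{n}\sum_{i=1}^n x_i^{1/2}.$$
The decisive feature is that $G^{1/2}$ is exactly the geometric mean of the numbers $x_i^{1/2}$, so the trailing term has precisely the same form as $A-G$, only applied to the square roots. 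This makes the decomposition iterable.

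Set $A^{(k)}:=\frac{1}{n}\sum_i x_i^{1/2^k}$ (so $A^{(0)}=A$); the geometric mean of the $x_i^{1/2^k}$ is $G^{1/2^k}$. Applying the one-step identity to $(x_i^{1/2^{k-1}})_{i=1}^n$ yields the recursion
$$A^{(k-1)} - G^{1/2^{k-1}} = \frac{1}{n}\sum_{i=1}^n \bigl(x_i^{1/2^k}-G^{1/2^k}\bigr)^2 + 2G^{1/2^k}\bigl(A^{(k)} - G^{1/2^k}\bigr).$$
Substituting this $N$ times and collecting the telescoping prefactor $\prod_{j=1}^{k-1}2G^{1/2^j} = 2^{k-1}G^{1-1/2^{k-1}}$ produces the partial-sum identity
$$A-G = \sum_{k=1}^N 2^{k-1}G^{1-1/2^{k-1}}\cdot\frac{1}{n}\sum_{i=1}^n \bigl(x_i^{1/2^k}-G^{1/2^k}\bigr)^2 + R_N,\quad R_N := 2^N G^{1-1/2^N}\bigl(A^{(N)}-G^{1/2^N}\bigr).$$

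The main obstacle is showing $R_N\to 0$, since the prefactor $2^N$ grows and must be beaten by the decay of $A^{(N)}-G^{1/2^N}$. Assume first $G>0$ and set $t=1/2^N$. Taylor-expanding $x_i^t = \exp(t\log x_i) = 1 + t\log x_i + \tfrac12 t^2(\log x_i)^2 + O(t^3)$, averaging, and using $\tfrac{1}{n}\sum_i \log x_i = \log G$, the constant and linear-in-$t$ contributions cancel against the analogous expansion $G^t = 1 + t\log G + \tfrac12 t^2(\log G)^2 + O(t^3)$, leaving
$$A^{(N)}-G^{1/2^N} = \frac{t^2}{2}\cdot\frac{1}{n}\sum_{i=1}^n(\log x_i-\log G)^2 + O(t^3) = O(4^{-N}).$$
Since $G^{1-1/2^N}\to G$, this forces $R_N = O(2^N\cdot 4^{-N}) = O(2^{-N})\to 0$.

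The degenerate case $G=0$ (some $x_j$ vanishing) is handled separately and is straightforward: since $1-1/2^{k-1}>0$ for every $k\ge 2$, every term with $k\ge 2$ in (2.1) is zero, and the $k=1$ term (with the convention $0^0=1$) reduces to $\frac{1}{n}\sum_i x_i = A = A-G$; $R_N$ likewise vanishes at every step. Passing to the limit $N\to\infty$ in the partial-sum identity then yields (2.1) in all cases.
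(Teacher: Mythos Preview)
Your argument is correct and essentially identical to the paper's: both iterate the same one-step identity (the paper phrases it as computing $A-G-Y_1$ and recognizing the same structure at the next scale) to obtain the same partial-sum formula with remainder $R_N=2^NG^{1-1/2^N}\bigl(A^{(N)}-G^{1/2^N}\bigr)$, and then show $R_N\to 0$. The only cosmetic difference is in the remainder estimate: the paper observes that $\lim_{t\to 0}\tfrac{1}{t}\bigl(\tfrac{1}{n}\sum_i x_i^t-G^t\bigr)=\tfrac{1}{n}\sum_i\ln x_i-\ln G=0$ (first-order), whereas you push the Taylor expansion to second order to get the sharper $O(2^{-N})$ rate.
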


\begin{proof}
If $G=0,$ then (\ref{2.1}) is trivial. Assume in what follows $x_i>0$ for all $i.$  Denote $y_k=2^{k-1}G^{1-\frac{1}{2^{k-1}}}\frac{1}{n}\sum_{i=1}^n(x_i^{\frac{1}{2^k}}-G^{\frac{1}{2^k}})^2$ and $Y_m=\sum_{k=1}^my_k.$ Let us show by induction in $m$ that
\begin{equation}
\label{2.2}
A-G-Y_m=2^mG^{1-\frac{1}{2^m}}\left(\frac{\sum_{i=1}^nx_i^{\frac{1}{2^m}}}{n}-G^{\frac{1}{2^m}}\right).
\end{equation}
For $m=1$ we have by simple manipulation, that
\begin{align}
\label{2.3}
A-G-Y_1&=\frac{1}{n}\sum_{i=1}^nx_n-G-\frac{1}{n}\sum_{i=1}^n(\sqrt{x_i}-\sqrt{G})^2\\ \nonumber
&=2\sqrt{G}\left(\frac{1}{n}\sum_{i=1}^n\sqrt{x_i}-\sqrt{G}\right),
\end{align}
thus the case $m=1$ is proven. Also, formula (\ref{2.3}) shows how one must calculate the difference $A-G-Y_{m+1}$ having the formula (\ref{2.2}) for $A-G-Y_m.$ Indeed, assuming that (\ref{2.2}) holds for $m$ we have, that
\begin{align}
\label{2.4}
A-G-Y_{m+1}&=A_G-Y_m-y_{m+1}\\ \nonumber
&=2^mG^{1-\frac{1}{2^m}}\left(\frac{\sum_{i=1}^nx_i^{\frac{1}{2^m}}}{n}-G^{\frac{1}{2^m}}\right)
-\frac{1}{n}2^{m}G^{1-\frac{1}{2^{m}}}\sum_{i=1}^n(x_i^{\frac{1}{2^{m+1}}}-G^{\frac{1}{2^{m+1}}})^2\\ \nonumber
&=2^{m}G^{1-\frac{1}{2^{m}}}\left(\frac{\sum_{i=1}^nx_i^{\frac{1}{2^m}}}{n}-G^{\frac{1}{2^m}}
-\sum_{i=1}^n(x_i^{\frac{1}{2^{m+1}}}-G^{\frac{1}{2^{m+1}}})^2\right).
\end{align}
Observe, that the expression in the brackets of the last line in (\ref{2.4}) is exactly of the form of $A-G-Y_1$ in (\ref{2.3}) written for the sequence $\{x_i^{\frac{1}{2^m}}\}$ with the geometric mean $G^{\frac{1}{2^m}},$ thus owing to (\ref{2.3}) we discover
$$A-G-Y_{m+1}=2^{m+1}G^{1-\frac{1}{2^{m+1}}}\left(\frac{\sum_{i=1}^nx_i^{\frac{1}{2^{m+1}}}}{n}-G^{\frac{1}{2^{m+1}}}\right).$$
the proof of (\ref{2.2}) is finished now. It remans to prove, that
\begin{equation}
\label{2.5}
\lim_{m\to\infty}2^m\left(\frac{\sum_{i=1}^nx_i^{\frac{1}{2^m}}}{n}-G^{\frac{1}{2^m}}\right)=0.
\end{equation}
We can calculate
\begin{align*}
\lim_{t\to 0}\frac{1}{t}\left(\frac{\sum_{i=1}^nx_i^{t}}{n}-G^{t}\right)&=
\frac{1}{n}\sum_{i=1}^n\lim_{t\to 0}\frac{x_i^t-1}{t}-\lim_{t\to 0}\frac{G^t-1}{t}\\
&=\frac{1}{n}\sum_{i=1}^n\ln(x_i)-\ln(G)\\
&=0,
\end{align*}
as $G$ is the geometric mean of the sequence $\{x_i\}_{i=1}^n.$ The last observation yields the validity of (\ref{2.5}). The theorem is proven now.
\end{proof}
We get the following corollary.
\begin{corollary}
\label{cor:2.2}
Assume $n\in\mathbb N$ is a natural number. Then for any nonnegative numbers $x_1,x_2,\dots,x_n$ the inequality holds
\begin{equation}
\label{2.6}
\frac{x_1+x_2+\dots +x_n}{n}-(x_1x_2\dots x_n)^{1/n}\geq\frac{1}{n}\sum_{i=1}^n(\sqrt{x_i}-\sqrt{G})^2,
\end{equation}
where $G=(x_1x_2\dots x_n)^{1/n}.$ Moreover, the coefficient $1/n$ on the right hand side of (\ref{2.6}) is optimal and the equality
in (\ref{2.6}) holds if and only if either one of the numbers $x_i$ is zero or all $x_i$ are equal.
\end{corollary}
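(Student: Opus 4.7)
The plan is to read the inequality (2.6) directly off the new formula (2.1), then handle equality and optimality as simple post-processing. For the inequality itself, I observe that (2.1) writes $A-G$ as an infinite series of manifestly nonnegative terms: each summand is the nonnegative scalar $2^{k-1}G^{1-1/2^{k-1}}$ times a sum of squares. The $k=1$ summand equals exactly $\frac{1}{n}\sum_{i=1}^n(\sqrt{x_i}-\sqrt{G})^2$, since $2^0 G^0=1$, so discarding the $k\geq 2$ tail of (2.1) yields (2.6).

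To characterize equality, I would split on whether $G=0$. If some $x_i=0$, then $G=0$ and the right-hand side of (2.6) collapses to $\frac{1}{n}\sum_i x_i = A$, which matches the left-hand side $A-G=A$, so equality holds. If instead all $x_i>0$, then $G>0$ and equality in (2.6) forces every $k\geq 2$ summand in (2.1) to vanish; since $G^{1-1/2^{k-1}}>0$ and the map $y\mapsto y^{1/2^k}$ is injective on $[0,\infty)$, this requires $x_i=G$ for every $i$, i.e., all the $x_i$ coincide. This closes both directions of the equality statement.

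For optimality of the constant $1/n$, I would exhibit a one-parameter family of configurations on which the ratio of $A-G$ to $\frac{1}{n}\sum_i(\sqrt{x_i}-\sqrt{G})^2$ tends to $1$; then no constant larger than $1/n$ can survive. A convenient choice is $x_1=\eps$ with $x_2=\cdots=x_n=1$ and $\eps\to 0^+$: one computes $G=\eps^{1/n}\to 0$ and $A\to(n-1)/n$, while the sum $\frac{1}{n}\sum_i(\sqrt{x_i}-\sqrt{G})^2$ also tends to $(n-1)/n$ (the $i=1$ term is $o(1)$ and each $i\geq 2$ term tends to $1$). Hence the ratio tends to $1$, and any constant strictly larger than $1/n$ would violate (2.6) for all sufficiently small $\eps$.

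There is no serious obstacle, as (2.1) has already done the heavy lifting; the only mild care needed is that the tail-vanishing argument in the equality analysis genuinely requires $G>0$, which is why the degenerate case $G=0$ must be isolated first.
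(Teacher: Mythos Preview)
Your proof is correct and follows essentially the same approach as the paper: derive (2.6) by truncating the series (2.1) at $k=1$, read off the equality cases from the vanishing of the tail, and verify optimality by a degenerate configuration. The only difference is that for optimality the paper simply plugs in $x_1=1$, $x_2=\cdots=x_n=0$ and observes equality directly---a case you already established in your equality analysis---so your limiting argument with $\eps\to 0^+$ is correct but unnecessary.
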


\begin{proof}
The validity and also the case of equality of (\ref{2.6}) is trivial being a consequence of the formula (\ref{2.1}) being the obvious inequality $A-G\geq y_1.$ The optimality of the coefficient $\frac{1}{n}$ follows from the choice of the sequence $x_1=1,$ $x_i=0$ for $i=2,3,\dots,n.$
\end{proof}
Of course another inequality would be $A-G\geq Y_2,$ which reads as
\begin{equation}
\label{2.7}
A-G\geq \frac{1}{n}\sum_{i=1}^n(\sqrt{x_i}-\sqrt{G})^2
+2\sqrt{G}\frac{1}{n}\sum_{i=1}^n(x_i^{\frac{1}{4}}-G^{\frac{1}{4}})^2.
\end{equation}

\section{Another version of a stronger Cauchy inequality}
\setcounter{equation}{0}

In this section we prover following stronger version of the Cauchy inequality which has a different form than (\ref{2.6}) or (\ref{2.7}).

\begin{theorem}(Second strong Cauchy inequality)
\label{th:3.1}
Assume $n\in\mathbb N$ is a natural number. Then for any nonnegative numbers $x_1,x_2,\dots,x_n$ the inequality holds
\begin{equation}
\label{3.1}
\frac{x_1+x_2+\dots +x_n}{n}-(x_1x_2\dots x_n)^{1/n}\geq\frac{1}{n(n-1)}\sum_{1\leq i<j\leq n}(\sqrt{x_i}-\sqrt{x_j})^2
\end{equation}
and the coefficient $1/{n(n-1)}$ on the right hand side of (\ref{3.1}) is optimal.
Moreover the equality holds in (\ref{3.1}) only in one of the following cases:
\begin{itemize}
\item[(i)] If $n=2$.
\item[(ii)] All but one of the numbers $x_1,x_2,\dots,x_n$ are zero.
\item[(iii)] All of the numbers $x_1,x_2,\dots,x_n$ are equal.
\end{itemize}

\end{theorem}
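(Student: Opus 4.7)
The plan is to reduce (\ref{3.1}) to the classical AM--GM inequality applied to the $\binom{n}{2}$ numbers $\sqrt{x_ix_j}$, $1\leq i<j\leq n$. First I would expand the squares on the right-hand side, noting that each index $k$ appears in exactly $n-1$ of the unordered pairs:
\[\sum_{1\leq i<j\leq n}(\sqrt{x_i}-\sqrt{x_j})^2 = (n-1)\sum_{k=1}^n x_k - 2\sum_{1\leq i<j\leq n}\sqrt{x_ix_j}.\]
Substituting this into (\ref{3.1}) and using $\sum_{k=1}^n x_k = nA$, the claim simplifies to the equivalent form
\[\frac{1}{\binom{n}{2}}\sum_{1\leq i<j\leq n}\sqrt{x_ix_j}\ \geq\ G.\]

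Next I would verify the combinatorial identity stating that the geometric mean of these $\binom{n}{2}$ numbers is exactly $G$: since each $x_k$ appears in $n-1$ pairs,
\[\prod_{1\leq i<j\leq n}\sqrt{x_ix_j} = \prod_{k=1}^n x_k^{(n-1)/2},\]
and taking the $\binom{n}{2}$-th root yields $\prod_{k=1}^n x_k^{1/n} = G$. Therefore the reduced inequality is precisely the classical AM--GM inequality applied to the collection $\{\sqrt{x_ix_j}\}_{i<j}$, and (\ref{3.1}) follows immediately.

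For the equality discussion, I would invoke that AM--GM is tight if and only if all entries coincide (which, when $G=0$, means all entries are zero). When $n=2$ there is a single term $\sqrt{x_1x_2}$ and equality is automatic, accounting for case (i). For $n\geq 3$, the condition that $\sqrt{x_ix_j}$ is constant across all pairs forces either at most one of the $x_k$ to be nonzero (case (ii)) or all of them to be equal (case (iii)); indeed, if $x_1,x_2>0$, then for any other index $j$ the equalities $x_1x_j = x_2x_j = x_1x_2$ yield $x_j = x_2 = x_1$. For the optimality of $\tfrac{1}{n(n-1)}$ I would test the extremal configuration $x_1=1$, $x_2=\dots=x_n=0$: the left-hand side equals $\tfrac{1}{n}$, while the right-hand side equals $(n-1)\cdot\tfrac{1}{n(n-1)}=\tfrac{1}{n}$, so no strictly larger coefficient can be valid.

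I do not anticipate any real obstacle; the only slightly delicate point is the bookkeeping behind the identity $\prod_{i<j}\sqrt{x_ix_j} = G^{n(n-1)/2}$, which is a one-line counting argument. Once the reduction to AM--GM on the $\binom{n}{2}$ pairwise products is spotted, the rest of the proof is routine.
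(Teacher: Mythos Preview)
Your proposal is correct and follows essentially the same route as the paper: expand the squares to reduce (\ref{3.1}) to the inequality $\sum_{i<j}\sqrt{x_ix_j}\geq \tfrac{n(n-1)}{2}G$, recognize this as AM--GM for the $\binom{n}{2}$ numbers $\sqrt{x_ix_j}$, and then read off the equality cases and test optimality with $x_1=1$, $x_2=\dots=x_n=0$. Your write-up is in fact slightly more explicit than the paper's, since you spell out the counting argument showing that the geometric mean of the $\sqrt{x_ix_j}$ equals $G$.
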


\begin{proof}
the proof is done by opening the brackets on the right hand side to get the analog inequality
\begin{equation}
\label{3.2}
\sum_{1\leq i<j\leq n}\sqrt{x_ix_j}\geq\frac{n(n-1)}{2}G,
\end{equation}
which is again the geometric and arithmetic mean inequality written for the numbers $\sqrt{x_ix_j}$ for $1\leq i<j\leq n.$ Let us now analyse the equality case in (\ref{3.2}). If $n=2$ then (\ref{3.1}) obviously becomes equality. If $n>2$ then clearly the equality in (\ref{3.2}) holds if all numbers $\sqrt{x_ix_j}$ are equal for $1\leq i<j\leq n.$ If $x_i\neq 0$ for some $1\leq i\leq n,$ then we get from the equality $\sqrt{x_ix_j}=\sqrt{x_ix_k}$ that $x_j=x_k$ for $j,k\neq i.$ On the other hand as $n\geq 3$ the equality $\sqrt{x_ix_j}=\sqrt{x_jx_k}$ holds and thus we get $x_j(x_i-x_k)=0$ for all $j,k\neq i$ and $j\neq k.$ This then implies that $x_j=0$ for all $j\neq i$ or $x_i=x_j$ for all $1\leq i,j\leq n,$ which are exactly cases $(ii)$ and $(iii)$ respectively. It is trivial that both cases provide equality in (\ref{3.1}). The optimality of the constant $\frac{1}{n(n-1)}$ follows from the choice of the sequence $x_1=1,$ $x_i=0$ for $i=2,3,\dots,n.$ The proof is finished now.
\end{proof}

\section{The optimality of the exponents and constants}
 \setcounter{equation}{0}

In this section we prove the optimality of the exponent $2$ in both inequalities (\ref{2.6}) and (\ref{3.1}). It is well known that
a quantitative Cauchy inequality of the form (\ref{2.6}) yields a quantitative Brunn-Minkoski, Wulff and isoperimetric inequalities, see [\ref{bib:Ball},\ref{bib:Fig.Mag.Pra.1},\ref{bib:Fig.Mag.Pra.2},\ref{bib:Segal},] and the
the better the exponent in (\ref{2.6}) is the better the obtained Brunn-Minkowski type inequality is, thus the optimality question of $\alpha$
in both (\ref{2.6}) and (\ref{3.1}) is very important. By optimality we mean the following: find a pair of constants $C_n,\alpha>0$ such, that the inequality
\begin{equation}
\label{4.1}
A-G\geq C_n\sum_{i=1}^n{|x_i^{1/\alpha}-G^{1/\alpha}|^{\alpha}},\quad\text{for all}\quad x_1,x_2,\dots,x_n\geq 0,
\end{equation}
holds for all nonnegative sequences $x_1,x_2,\dots,x_n.$ First of all, as seen before, the choice of the sequence $x_1=1,$ $x_i=0$ for $i=2,3,\dots,n$ makes it clear, that $C_n\leq \frac{1}{n}$ in (\ref{4.1}). Assume in what follows we consider (\ref{4.1}) with $C_n=\frac{1}{n},$
i.e., the inequality
\begin{equation}
\label{4.2}
A-G\geq \frac{1}{n}\sum_{i=1}^n{|x_i^{1/\alpha}-G^{1/\alpha}|^{\alpha}},\quad\text{for all}\quad x_1,x_2,\dots,x_n\geq 0.
\end{equation}
It is straightforward to check that given $x>y>0$ numbers, the function $h(t)=(x^{1/t}-y^{1/t})^t$ decreases in the interval $(0,\infty),$ thus the optimal value of $\alpha$ in (\ref{4.1}) will be its smallest possible value. Let us prove that $\alpha\geq 2$ in (\ref{4.2}).
To that end, we test (\ref{4.2}) with the sequence $x_1=1+\epsilon,$ and $x_i=1,$ $i>1,$ where $\epsilon>0$ is a small number.
We have then from (\ref{4.2}) that
\begin{equation}
\label{4.3}
1+\frac{\epsilon}{n}-(1+\epsilon)^{1/n}\geq C_n\sum_{i=1}^n{|x_i^{1/\alpha}-G^{1/\alpha}|^{\alpha}}.
\end{equation}
The left hand side can be approximated by the binomial expansion, and we have up to the second order,
\begin{equation}
\label{4.4}
1+\frac{\epsilon}{n}-(1+\epsilon)^{1/n}=\frac{1}{2n}\left(1-\frac{1}{n}\right)\epsilon^2+O(\epsilon^3),
\end{equation}
and by the Bernoulli inequality,
\begin{align}
\label{4.5}
C_n\sum_{i=1}^n{|x_i^{1/\alpha}-G^{1/\alpha}|^{\alpha}}&\geq (n-1)C_n\left((1+\epsilon)^{1/{\alpha n}}-1\right)^\alpha\\ \nonumber
&\geq \frac{C_n}{(\alpha n)^\alpha}\epsilon^\alpha.
\end{align}
Therefore, combining (\ref{4.3})-(\ref{4.5}) we obtain for small $\epsilon,$ that
$$\frac{1}{n}\left(1-\frac{1}{n}\right)\epsilon^2\geq \frac{C_n}{(\alpha n)^\alpha}\epsilon^\alpha,$$
which then yields the estimate $\alpha\geq 2$ in the limit $\epsilon\to 0.$
Analogously is one is interested in an inequality of the form
\begin{equation}
\label{4.1}
A-G\geq C_n\sum_{1\leq i<j\leq n}^n{|x_i^{1/\alpha}-x_j^{1/\alpha}|^{\alpha}},\quad\text{for all}\quad x_1,x_2,\dots,x_n\geq 0,
\end{equation}
the the optimal values of $C_n$ and $\alpha$ are $\frac{1}{n(n-1)}$ and $2$ respectively.

\section{Acknowledgement}

The author would like to thanks Andrej Cherkaev and Elena Cherkaev for helpful advice.

\end{document}